\documentclass[12pt,a4paper]{article}

\usepackage[top=25mm, bottom=25mm, left=25mm, right=25mm]{geometry}

\usepackage{tikz} 
\usetikzlibrary{shapes.geometric,arrows,positioning}
 \usetikzlibrary{shapes.gates.logic.US,trees,positioning,arrows}
\usepackage{afterpage}
\usepackage{paralist}
\usetikzlibrary{arrows,calc,fit} 
\usepackage{lipsum}
\usepackage{rotating}
\usepackage{mathrsfs}
\usetikzlibrary{positioning,shadows,arrows}
\usepackage{enumerate}
\usepackage{ amsfonts,amsthm}
\usepackage{ multirow, graphicx,appendix}
\usepackage{amssymb}
\usepackage{amsbsy}

\usepackage{chemarr} 
\usepackage{enumitem} 
\usepackage{amsmath}

\usepackage{color}
\usepackage{amsmath,amsfonts,amssymb,amscd,amsthm,xspace}

\usetikzlibrary{patterns}
\usetikzlibrary{decorations.pathmorphing, shapes}
\usetikzlibrary{automata}

\usepackage{mathtools}




\title{A cellular algebra with specific decomposition of the unity }
\author{ Mufida M. Hmaida}
\date{}

\begin{document}

\newcommand{\Tn}{\mathbb{T}_{n,2}(\delta_0 , \delta_1 ) }
\newcommand{\TN}{\mathbb{T}_{n,2} }
\newcommand{\Tm}{\mathbb{T}_{n,m}(\delta_0 , \dots , \delta_{m-1} ) }
\newcommand{\TM}{\mathbb{T}_{n,m} }
\newcommand{\tn}{\mathcal{T}_{n,2} }
\newcommand{\tm}{\mathcal{T}_{n,m} }

\newcommand{\TL}{\mathsf{TL}_{n}( \delta) }
\newcommand{\Tl}{\mathsf{TL}_{n} }

\newcommand{\Pn}{\mathbb{P}_{n,2}(\delta_0 , \delta_1 ) }
\newcommand{\Pm}{\mathbb{P}_{n,m} (\delta_0 , \dots , \delta_{m-1} )}
\newcommand{\PN}{\mathbb{P}_{n,2} }
\newcommand{\pn}{\mathcal{P}_{n,2} }
\newcommand{\PM}{\mathbb{P}_{n,m} }
\newcommand{\ppm}{\mathcal{P}_{n,m} }

\newcommand{\Sym}{\mathfrak{S}}

\newcommand{ \Cc }{ \mathfrak{ C }}

\newcommand{\Pp}{\mathbb{P}_{n} ( \delta ) }
\newcommand{\Br}{\mathbb{B}_{n}( \delta) }

\newcommand{\Top}{\operatorname{top}}
\newcommand{\Bot}{\operatorname{bot}} 
\newcommand{\Hom}{\operatorname{Hom}}
\newcommand{\im}{\operatorname{im}}
\newcommand{\dom}{\operatorname{dom}}
\newcommand{\id}{\mathit{id}}
\newcommand{\ie}{\mathit{e}}
\newcommand{\Rad}{\operatorname{Rad}}
\newcommand{\rank}{\operatorname{rank}}
\newcommand{\Ker}{\operatorname{Ker}}
\newcommand{\type}{\operatorname{type}}

\newcommand{\A}{\mathbb{A}  }
\newcommand{\F}{\mathbb{F}  }
\newcommand{\Z}{\mathbb{Z}  }
\newcommand{ \Car }{ \mathbf{ \mathscr{ C } } }   
\newcommand{\Gr}{\mathsf{ G } }

\newcommand{\basis}{\mathsf{ C } }
\newcommand{\Basis}{\mathsf{ B } }
\newcommand{\V}{ V(\lambda ,i) }
\newcommand{\VL}{ \overline{V}(\lambda ,i) }

\theoremstyle{plain}
\newtheorem{theorem}{Theorem}[section]
\newtheorem{corollary}[theorem]{Corollary}
\newtheorem{lemma}[theorem]{Lemma}
\newtheorem{example}{Example}[theorem]
\newtheorem{proposition}[theorem]{Proposition}
\newtheorem{ass}[theorem]{Assumption}
\newtheorem{axiom}[theorem]{Axiom}

\newtheorem{definition}[theorem]{Definition}
\newtheorem{remark}[theorem]{Remark}

\maketitle

\textbf{Abstract.} Let $ \A $ be a cellular algebra over a field $\F$ with a decomposition of the identity $ 1_\A $ into orthogonal idempotents $\ie_i$, $i \in I$ (for some finite set $I$) satisfying some properties. We describe the entire Loewy structure of cell modules of the algebra $ \A $ by using the representation theory of the algebra $ \ie_i \A \ie_i $ for each $ i $. Moreover, we also study the block theory of $ \A $ by using this decomposition.

\section{Introduction}     

\hspace{15pt}
Let $ \Tm $, or simply $ \TM $, be the bubble algebra with $ m $-different colours, $ \delta_i \in \F $, which is defined in Grimm and Martin\cite{GM2}. In the same paper, it has been showed that it is semi-simple when none of the parameters $ \delta_i $ is a root of unity. Later,  Jegan\cite{JM} showed that the bubble algebra is a cellular algebra in the sense of Graham and Lehrer\cite{GL1}. The identity of the algebra $ \TM $ is the summation of all the different multi-colour partitions that their diagrams connect $j$ only to $j'$ with any colour for each $1 \leq j \leq n $, these multi-colour partitions are orthogonal idempotents. The goal of this paper is to generalize the technique that we use to study the representation theory of $ \TM $ in \cite{H}.

\hspace{15pt}
Wada\cite{W3} consider a decomposition of the unit element into orthogonal idempotents and a certain map $ \alpha $ to define a Levi type subalgebra and a parabolic subalgebra of the algebra $ \A $, and then the relation between the representation theory of each one has been studied. With using the same decomposition, we construct a Levi type subalgebra $ \bar{\A} $ (without using any map), and classify the blocks of $ \A $ by using the representation theory of the algebra  $ \bar{\A} $.

\section{Cellular algebras}   

\hspace{15pt}
We start by reviewing the definition of a \emph{cellular algebra}, which was introduced  by Graham and Lehrer\cite{GL1} over a ring but we replace it by a field, since we need this assumption later.

\begin{definition}\cite[Definition 1.1]{GL1}\textbf{.}
A cellular algebra over $ \F $ is an associative unital algebra $ \A $, together with a tuple $ ( \Lambda, T(.),  \basis, * ) $ such that 
\begin{enumerate}
\item The set $ \Lambda $ is finite and partially ordered by the relation $ \geq $.
\item For every $ \lambda \in \Lambda$, there is a non-empty finite set $ T(\lambda)$ such that for an pair $ (s,t) \in T(\lambda)\times T(\lambda)  $ we have an element $ c_{st}^\lambda \in \A $, and the set $ \basis := \{ c_{st}^{\lambda} \mid s,t \in T(\lambda) \text{ for some } \lambda \in \Lambda  \}$ forms a free $\F$-basis of $\A$. 
\item The map $* : \A \to \A$ is $\F$-linear involution (This means that $ * $ is an anti-automorphism with $ *^2=\id_{ \A}$ and $ (c_{st}^\lambda)^*=c_{ts}^\lambda$ for all $ \lambda \in \Lambda $, $ s,t \in T(\lambda) $).
\item For $ \lambda\in\Lambda, s,t \in T(\lambda)$ and $ a \in \A$  we have 
\begin{align}  \label{eq1.1}
 a c_{st}^\lambda \equiv \sum_{ u \in T(\lambda)} r_a^{(u,s)} c_{ut}^\lambda \mod \A^{ > \lambda}, 
 \end{align} 
where $ r_a^{(u,s)} \in \F$ depends only on $ a,u$ and $s$. Here $\A^{> \lambda}$  denotes the $ \F$-span of all basis elements with upper index strictly greater than $\lambda$.
\end{enumerate}
\end{definition}

\hspace{15pt}
For each $\lambda \in \Lambda$, the cell module $ \Delta( \lambda ) $ is the left $ \A $-module with an $ \F $-basis $ \Basis:= \{ c^{\lambda}_{s} \mid s \in T(\lambda) \} $ and an action defined by 
\begin{align} \label{eq1.2}
 a c^{\lambda}_{s} = \sum_{ u \in T(\lambda)} r_a^{(s,u)} c^{\lambda}_{u} \;\;\;\; (a \in \A , \, s \in T ( \lambda )), 
 \end{align}
 where $  r_a^{(s,u)} \in \F $ is the same coefficient that in \eqref{eq1.1} .

\hspace{15pt}
A bilinear form $ \langle \; \; , \; \; \rangle  : \Delta( \lambda )  \times \Delta( \lambda )  \rightarrow \F $ can be defined by 
\[  \langle c^{\lambda}_{s}, c^{\lambda}_{t} \rangle c^{\lambda}_{ub} \equiv c^{\lambda}_{us}c^{\lambda}_{tb} \mod \A^{ > \lambda} \;\;\;(s,t,u,b \in T(\lambda)).\]
Note that this definition does not depend on the choice of $ u,b \in T(\lambda) $.

\hspace{15pt}
 Let $ \Gr_\lambda $ be the Gram matrix for $ \Delta( \lambda ) $ of the previous bilinear form  with respect to the basis $ \Basis $. All Gram matrices of cell modules that will be mentioned in this paper are with respect to the basis $ \Basis $ with the bilinear form defined by \eqref{eq1.2}.

\hspace{15pt}
 Let $ \Lambda^0 $ be the subset $ \{ \lambda \in \Lambda \mid \langle  \; , \;  \rangle  \neq 0 \} $. The radical 
\[ \Rad ( \Delta(\lambda) ) = \{ x \in \Delta(\lambda) \mid \langle  x , y  \rangle=0 \text{ for any } y \in \Delta(\lambda) \}\]
 of the form $ \langle  \; , \; \rangle $ is an $ \A $-submodule of $  \Delta(\lambda) $.

\begin{theorem}\cite[Chapter 2]{M3}\textbf{.} \label{thm1:cell}
Let $ \A $ be a cellular algebra over a field $ \F $. Then
\begin{enumerate}
\item $ \A $ is semi-simple if and only if $ \det \Gr_\lambda \neq 0 $ for each $ \lambda \in \Lambda $.
\item The quotient module $  \Delta(\lambda) / \Rad ( \Delta(\lambda)) $ is either irreducible or zero. That means that  $ \Rad ( \Delta(\lambda) ) $ is the radical of the module $ \Delta(\lambda) $ if $ \langle \;  , \;  \rangle  \neq 0 $.
\item The set $ \{ L( \lambda) := \Delta(\lambda) / \Rad ( \Delta(\lambda)) \mid \lambda \in \Lambda^0 \} $ consists of all non-isomorphic irreducible $ \A $-modules.
\item Each cell module $ \Delta( \lambda ) $ of $\A$ has a composition series with sub-quotients isomorphic to $ L ( \mu) $, where $ \mu \in \Lambda^0 $. The multiplicity of $ L ( \mu ) $ is the same in any composition series of $ \Delta( \lambda) $ and we write $ d_{ \lambda \mu } = [  \Delta( \lambda ) : L( \mu) ] $ for this multiplicity.
 \item The \emph{decomposition matrix} $ \mathsf{D} = \big( d_{ \lambda \mu }  \big)_{  \lambda \in \Lambda ,  \mu \in \Lambda^0 } $ is upper uni-triangular, i.e. $ d_{ \lambda \mu}=0 $  unless $ \lambda \leq \mu $ and $ d_{ \lambda \lambda }=1 $ for $ \lambda \in \Lambda^0 $.
 \item If $ \Lambda $ is a finite set and $ \Car $ is the Cartan matrix of $ \A $, then $ \Car= \mathsf{D}^t \mathsf{D}  $.
\end{enumerate}
\end{theorem}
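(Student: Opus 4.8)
\medskip
\noindent The plan is to read off all six assertions from the chain of two-sided ideals carried by the cellular basis. First I would record the elementary consequences of \eqref{eq1.1} together with its image under $*$: for every subset $\Gamma\subseteq\Lambda$ that is closed upwards for $\geq$, the $\F$-span $\A^{\Gamma}$ of $\{\,c^{\mu}_{st}: \mu\in\Gamma\,\}$ is a two-sided, $*$-stable ideal of $\A$. In particular $\A^{>\lambda}$ is an ideal; since the $\A$-action on $\Delta(\lambda)$ is by definition read modulo $\A^{>\lambda}$ we get $\A^{>\lambda}\Delta(\lambda)=0$, and comparing upper indices in the product $c^{\nu}_{st}c^{\lambda}_{uv}\in\A^{\geq\nu}\cap\A^{\geq\lambda}$ shows more precisely that $c^{\nu}_{st}$ annihilates $\Delta(\lambda)$ whenever $\nu\not\leq\lambda$. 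Dualising \eqref{eq1.1} by $*$ gives the adjunction $\langle ax,y\rangle=\langle x,a^{*}y\rangle$ on $\Delta(\lambda)$, which makes $\Rad(\Delta(\lambda))$ an $\A$-submodule, and a direct computation from \eqref{eq1.2} yields $c^{\lambda}_{ut}\,x=\langle c^{\lambda}_{t},x\rangle\,c^{\lambda}_{u}$ for all $x\in\Delta(\lambda)$; hence any $x\notin\Rad(\Delta(\lambda))$ generates $\Delta(\lambda)$, so $\Delta(\lambda)/\Rad(\Delta(\lambda))$ is simple or zero. This is (2), and it exhibits $L(\lambda)$ as the unique simple quotient of $\Delta(\lambda)$ when $\lambda\in\Lambda^{0}$.

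Next, fix a linear refinement $\lambda_{1},\dots,\lambda_{k}$ of $\geq$ (so $\lambda_{a}>\lambda_{b}$ forces $a<b$) and set $I_{j}=\A^{\{\lambda_{1},\dots,\lambda_{j}\}}$. Each $I_{j}$ is an ideal, $\A^{>\lambda_{j}}\subseteq I_{j-1}$, and the quotient $I_{j}/I_{j-1}\cong\A^{\geq\lambda_{j}}/\A^{>\lambda_{j}}$ is, via $c^{\lambda_{j}}_{st}\mapsto c^{\lambda_{j}}_{s}\otimes c^{\lambda_{j}}_{t}$, the ``generalised matrix algebra'' $\Delta(\lambda_{j})\otimes_{\F}\Delta(\lambda_{j})$ with product $(x\otimes y)(x'\otimes y')=\langle y,x'\rangle\,x\otimes y'$ and, as a left $\A$-module, is isomorphic to $\Delta(\lambda_{j})^{\oplus|T(\lambda_{j})|}$; the index comparison of the first paragraph also gives $I_{j-1}\Delta(\lambda_{j})=0$. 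Now let $S$ be any simple $\A$-module and $j$ minimal with $I_{j}S\neq0$; the surjection of left $\A$-modules $I_{j}/I_{j-1}\twoheadrightarrow S$ presents $S$ as a quotient of $\Delta(\lambda_{j})^{\oplus|T(\lambda_{j})|}$, hence of $\Delta(\lambda_{j})$, hence $S\cong L(\lambda_{j})$ by (2); as $S\neq 0$ this forces $\lambda_{j}\in\Lambda^{0}$. Thus $\{L(\lambda):\lambda\in\Lambda^{0}\}$ is a complete set of simple modules, and they are pairwise non-isomorphic because the jump index $j$ is recovered from $L(\lambda_{j})$ as the least $m$ with $I_{m}L(\lambda_{j})\neq0$ (here one uses $I_{j-1}\Delta(\lambda_{j})=0$ and the generation statement above). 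Assertion (4) is then the Jordan--H\"older theorem applied to the finite-dimensional module $\Delta(\lambda)$.

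For (5): if $L(\mu)=L(\lambda_{j})$ is a composition factor of $\Delta(\lambda)=\Delta(\lambda_{i})$, then $I_{i-1}$ annihilates it while $I_{j}$ does not, so $i\leq j$; running this over all linear refinements of $\geq$ forces $\lambda,\mu$ to be comparable in the direction required by (5). For $d_{\lambda\lambda}=1$ one notes that, by the action formula, $\A^{\geq\lambda}$ annihilates $\Rad(\Delta(\lambda))$, so with $\lambda=\lambda_{i}$ the ideal $I_{i}$ kills $\Rad(\Delta(\lambda))$ but not $L(\lambda)$, whence $L(\lambda)$ occurs in $\Delta(\lambda)$ only through the top $\Delta(\lambda)/\Rad(\Delta(\lambda))=L(\lambda)$, with multiplicity $1$. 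For (1): if every $\det\Gr_{\lambda}\neq0$ then every $\Delta(\lambda)=L(\lambda)$ is simple and $\Lambda=\Lambda^{0}$, and by (5) they are pairwise non-isomorphic, so $\dim\A=\sum_{\lambda}|T(\lambda)|^{2}=\sum_{\lambda}(\dim L(\lambda))^{2}=\dim(\A/\Rad\A)$ and $\A$ is semisimple; conversely if $\A$ is semisimple then $\dim\A=\sum_{\lambda\in\Lambda^{0}}(\dim L(\lambda))^{2}\leq\sum_{\lambda\in\Lambda^{0}}(\dim\Delta(\lambda))^{2}\leq\sum_{\lambda\in\Lambda}|T(\lambda)|^{2}=\dim\A$, forcing $\Lambda=\Lambda^{0}$ and $\Rad(\Delta(\lambda))=0$, i.e. $\det\Gr_{\lambda}\neq0$, for all $\lambda$ (over a non-splitting field one divides each $\dim L$ by $\dim\operatorname{End}_{\A}(L)$). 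Finally (6): the left regular module carries the cell filtration $\{I_{j}\}$ with $[\A:\Delta(\lambda)]=|T(\lambda)|$; invoking the standard facts for cellular algebras that each projective cover $P(\mu)$ has a $\Delta$-filtration and that $*$-duality gives the reciprocity $[P(\mu):\Delta(\lambda)]=d_{\lambda\mu}$, we get $\Car_{\mu\nu}=[P(\mu):L(\nu)]=\sum_{\lambda}[P(\mu):\Delta(\lambda)]\,[\Delta(\lambda):L(\nu)]=\sum_{\lambda}d_{\lambda\mu}d_{\lambda\nu}=(\mathsf{D}^{t}\mathsf{D})_{\mu\nu}$.

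The genuinely delicate points are: in (5), upgrading the cheap bound ``$L(\mu)$ occurs in $\Delta(\lambda)$ only if $\mu\not>\lambda$'' to honest poset-triangularity, which hinges on the argument being available for \emph{every} linear refinement of $\geq$; and in (6), the two cited facts, namely that projective indecomposables of a cellular algebra are $\Delta$-filtered and that the anti-involution $*$ matches $\Delta$-filtrations of projectives with $\nabla$-cofiltrations. Everything else is bookkeeping on the single structural observation that the sections $\A^{\geq\lambda}/\A^{>\lambda}$ are generalised matrix algebras built from the cell modules and their bilinear forms.
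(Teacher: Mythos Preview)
The paper does not give its own proof of this theorem: it is stated purely as a citation of Mathas~\cite{M3}, Chapter~2, and no argument is supplied in the present paper. Your proposal is a correct reconstruction of the standard proof found in that reference (and in Graham--Lehrer~\cite{GL1}), via the chain of $*$-stable two-sided ideals $\A^{\geq\lambda}$, the identification of each section $\A^{\geq\lambda}/\A^{>\lambda}$ with a generalised matrix algebra on $\Delta(\lambda)$, and the Brauer--Humphreys style reciprocity for~(6); so there is nothing in the paper itself to compare it against, and your approach coincides with that of the cited source.
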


\section{A Levi type sub-algebra}

\hspace{15pt}
In this section, we construct a Live type subalgebra $ \bar{\A} $ of $ \A $ and study its representation theory.

The second and the third parts of the following assumption existed in Assumption~$ 2.1 $ in \cite{W3}. 

\begin{ass} \label{ass}
Throughout the remainder of this paper, we assume the following statements $(A1)- (A4)$.
\begin{enumerate}[label=(A\arabic*)] 
\item There exists a finite set $ I $. 
\item The unit element $1_\A$ of $\A$ is decomposed as $ 1_\A= \sum\limits_{i \in I} \ie_i $ with $ \ie_i \neq 0 $ and $ \ie_i \ie_j=0 $ for all $ i \neq j $ and $ \ie_i^2=\ie_i $. 
\item For each $ \lambda \in \Lambda $ and each $ t \in T ( \lambda )$, there exists an element $ i \in I $ such that 
\begin{align} \label{eq.1}
\ie_i c_{ts}^\lambda = c_{ts}^\lambda \;\;\;\;\; \text{for any } s \in T( \lambda ) . 
\end{align}
\item $ \ie_i^*=\ie_i $ for each $ i \in I $. Note that from \eqref{eq.1}, for each $ \lambda \in \Lambda $ and each $ t \in T ( \lambda )$ we have
\begin{align} \label{eq.2}
c_{st}^\lambda \ie_i = c_{st}^\lambda \ie_i^*= \big( \ie_i c_{ts}^\lambda \big)^* = \big(c_{ts}^\lambda \big)^*= c_{st}^\lambda \;\;\;\;\; \text{for any } s \in T( \lambda ) . 
\end{align}
\end{enumerate}
\end{ass}

\hspace{15pt}
From $ (A2) $ and $ (A3) $, we obtain the next lemma.

\begin{lemma} \cite[Lemma 2.2]{W3}\textbf{.} \label{thm.1}
Let $ t \in T(\lambda) $, where $ \lambda \in \Lambda $, and $ i\in I $ be such that $ \ie_i c_{ts}^\lambda = c_{ts}^\lambda $ for any $ s \in T( \lambda )  $. Then for any $ j \in I $ such that $ j \neq i $, we have $ \ie_j c_{ts}^\lambda =0$ for any $ s \in T( \lambda )  $. In particular, for each $ t \in T(\lambda) $, there exists a unique $ i\in I $ such that $ \ie_i c_{ts}^\lambda = c_{ts}^\lambda $ for any $ s \in T( \lambda )  $.
\end{lemma}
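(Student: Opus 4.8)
The plan is to deduce everything from the orthogonal-idempotent decomposition $1_\A = \sum_{j\in I}\ie_j$ together with the fact that, by hypothesis, $\ie_i$ acts as the identity on $c_{ts}^\lambda$. Fix $t\in T(\lambda)$ and $i\in I$ with $\ie_i c_{ts}^\lambda = c_{ts}^\lambda$ for all $s\in T(\lambda)$. For $j\neq i$, multiply the relation $c_{ts}^\lambda = \ie_i c_{ts}^\lambda$ on the left by $\ie_j$ and use $\ie_j\ie_i = 0$ from (A2); this gives $\ie_j c_{ts}^\lambda = \ie_j\ie_i c_{ts}^\lambda = 0$, which is the first claim.

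For the existence-and-uniqueness statement, existence is exactly (A3). For uniqueness, suppose $i'\in I$ also satisfies $\ie_{i'} c_{ts}^\lambda = c_{ts}^\lambda$ for all $s$. If $i'\neq i$, then applying the first part of the lemma (with the roles suitably assigned) yields $\ie_{i'} c_{ts}^\lambda = 0$; comparing with $\ie_{i'} c_{ts}^\lambda = c_{ts}^\lambda$ forces $c_{ts}^\lambda = 0$. But $c_{ts}^\lambda$ is a member of the cellular basis $\basis$ and hence nonzero, a contradiction; so $i' = i$. One small point to record is that the condition in (A3) and in the lemma is genuinely a property of the row index $t$ alone (not of the pair $(t,s)$), because the displayed identities are quantified over all $s\in T(\lambda)$; this is what makes "for each $t$ there is a unique $i$" a well-posed assignment $t\mapsto i$.

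There is essentially no obstacle here: the argument is a two-line manipulation of orthogonal idempotents plus the nontriviality of basis elements. The only thing to be careful about is bookkeeping — making sure one uses $\ie_j\ie_i=0$ in the correct order ($\ie_j$ on the left of $\ie_i$) and that one does not accidentally need $\ie_i^2=\ie_i$ or (A4), neither of which is required for this particular lemma. The nontriviality $c_{ts}^\lambda\neq 0$, used for uniqueness, follows from axiom (2) of the definition of a cellular algebra, since $\basis$ is a free $\F$-basis of $\A$ and in particular $\A\neq 0$ (each $\ie_i\neq 0$).
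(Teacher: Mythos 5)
Your proof is correct and is exactly the intended argument: the paper itself gives no proof, merely noting that the lemma follows from (A2) and (A3) and citing Wada, and your two-line computation $\ie_j c_{ts}^\lambda=\ie_j\ie_i c_{ts}^\lambda=0$ plus the nonvanishing of the basis element $c_{ts}^\lambda$ for uniqueness is precisely that deduction. Your side remark that neither $\ie_i^2=\ie_i$ nor (A4) is needed here is also accurate.
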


\hspace{15pt}
For $ \lambda \in \Lambda $ and $i \in I$, we define
\begin{align*}
T(\lambda, i)&= \{ t \in T(\lambda) \mid \ie_i c_{ts}^\lambda = c_{ts}^\lambda \text{ for any } s \in T( \lambda ) \}, \\
\Lambda_i&= \{ \lambda \in \Lambda \mid T(\lambda, i) \neq \emptyset \} ,\\
I_{\lambda} &= \{ i \in I \mid \lambda \in \Lambda_i \} .
\end{align*}

By Lemma \ref{thm.1}, we have 
\[ T ( \lambda) = \coprod_{i \in I} T(\lambda, i).\]
Note that $ \Lambda_i $ is a poset with the same order relation on $ \Lambda$ and $ \Lambda =\bigcup_{i \in I} \Lambda_i $. Moreover, $ \Lambda_i \neq \emptyset $ for each $ i \in I $, and that because of $ 0 \neq \ie_i \in \A $ and $ \ie_i^2=\ie_i $.

From $(A3)$ and Lemma \ref{thm.1}, the element $ \ie_i $ can be written in the form 
\[\sum\limits_{ \substack{\lambda\in \Lambda, \\s,t \in T(\lambda,i)}} b_{(s,t,\lambda)} c_{st}^\lambda  \]
 where $ b_{(s,t,\lambda)} \in \F $.

\begin{theorem} \label{thm.2}
The algebra $ \ie_i \A \ie_i$ is a cellular algebra with a cellular basis $ \basis_i:=\{ c_{st}^{\lambda} \mid s,t \in T(\lambda, i) \text{ for some } \lambda \in \Lambda_i \}$ with respect to the poset $ \Lambda_i$ and the index set $ T(\lambda, i) $ for $ \lambda \in \Lambda_i  $, i.e. the following property holds;
\begin{enumerate}[label=(\arabic*)] 
\item An $ \F $-linear map $* : \ie_i \A \ie_i  \to \ie_i \A \ie_i $ defined by $c_{st}^\lambda \mapsto c_{ts}^\lambda$ for all $ c_{st}^\lambda \in \basis_i $ gives an  algebra anti-automorphism of $ \ie_i \A \ie_i $.
\item For any $ a \in \ie_i \A \ie_i$, $ c_{st}^\lambda \in \basis_i $, we have 
\begin{align*} 
 a c_{st}^\lambda \equiv \sum_{ u \in T(\lambda,i)} r_a^{(u,s)} c _{ut}^\lambda \mod ( \ie_i \A \ie_i)^{ > \lambda}, 
 \end{align*} 
where $ ( \ie_i \A \ie_i)^{ > \lambda} $ is an $ \F $-submodule of $\ie_i \A \ie_i$ spanned by $ \{ \mathrm{C}_{st}^{\lambda'} \mid s,t \in T(\lambda', i) $ for some $ \lambda' \in \Lambda_i $ such that $ \lambda' > \lambda \}$, and $ r_a^{(u,s)} $ does not depend on the choice of $ t \in T(\lambda,i) $.
\end{enumerate}
\end{theorem}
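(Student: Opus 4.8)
The plan is to verify the three defining features of a cellular basis for $\basis_i$ in turn --- that it is an $\F$-basis of $\ie_i\A\ie_i$, that the restriction of $*$ is an anti-automorphism permuting $\basis_i$ as claimed, and that the straightening relation holds modulo $(\ie_i\A\ie_i)^{>\lambda}$ --- by transporting the corresponding facts for the cellular algebra $\A$ across left and right multiplication by $\ie_i$. The single computation that makes everything run is the description of $\ie_i(-)\ie_i$ on a cell basis element: for every $c_{pq}^{\mu}\in\basis$,
\[
\ie_i\, c_{pq}^{\mu}\, \ie_i=
\begin{cases}
c_{pq}^{\mu}, & p,q\in T(\mu,i),\\[2pt]
0, & \text{otherwise,}
\end{cases}
\]
which follows from Lemma~\ref{thm.1} (the factor dies when $p\notin T(\mu,i)$), from \eqref{eq.2} together with Lemma~\ref{thm.1} applied to $c_{qp}^\mu$ (it dies when $q\notin T(\mu,i)$), and from \eqref{eq.1}--\eqref{eq.2} in the remaining case.

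Granting this, the basis claim is immediate: $\ie_i$ is the identity of $\ie_i\A\ie_i$, a general element has the shape $\ie_i a\ie_i$ with $a=\sum a_{pq}^{\mu}c_{pq}^{\mu}$, so $\ie_i a\ie_i=\sum_{\mu}\sum_{p,q\in T(\mu,i)}a_{pq}^{\mu}c_{pq}^{\mu}$ lies in the $\F$-span of $\basis_i$, while each $c_{st}^{\lambda}\in\basis_i$ equals $\ie_i c_{st}^{\lambda}\ie_i\in\ie_i\A\ie_i$; and $\basis_i\subseteq\basis$ is $\F$-linearly independent. The indexing data $\Lambda_i$ (a finite poset) and $\{T(\lambda,i)\}_{\lambda\in\Lambda_i}$ (non-empty finite sets) were already observed to be of the required form. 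Part (1) is just as short: $\ie_i^*=\ie_i$ by $(A4)$ gives $(\ie_i a\ie_i)^*=\ie_i a^*\ie_i\in\ie_i\A\ie_i$, so $*$ restricts to an $\F$-linear anti-automorphism of $\ie_i\A\ie_i$ squaring to the identity, and the rule $c_{st}^{\lambda}\mapsto c_{ts}^{\lambda}$ maps $\basis_i$ to itself since $s,t\in T(\lambda,i)$ forces $t,s\in T(\lambda,i)$.

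For part (2) I would fix $c_{st}^{\lambda}\in\basis_i$ and $a\in\ie_i\A\ie_i$, note $a=\ie_i a=a\ie_i$ and $c_{st}^{\lambda}\ie_i=c_{st}^{\lambda}$ (as $t\in T(\lambda,i)$, by \eqref{eq.2}), and start from \eqref{eq1.1} written as $a c_{st}^{\lambda}=\sum_{u\in T(\lambda)}r_a^{(u,s)}c_{ut}^{\lambda}+r$ with $r\in\A^{>\lambda}$. Multiplying on the left by $\ie_i$ and using $\ie_i c_{ut}^{\lambda}=c_{ut}^{\lambda}$ for $u\in T(\lambda,i)$ and $\ie_i c_{ut}^{\lambda}=0$ otherwise, then multiplying on the right by $\ie_i$ and using $c_{ut}^{\lambda}\ie_i=c_{ut}^{\lambda}$, yields
\[
a c_{st}^{\lambda}=\sum_{u\in T(\lambda,i)}r_a^{(u,s)}c_{ut}^{\lambda}+\ie_i r\ie_i .
\]
Applying the displayed case distinction to the cell basis elements occurring in $r$ then identifies $\ie_i\A^{>\lambda}\ie_i$ with the span of $\{c_{pq}^{\mu}\mid p,q\in T(\mu,i),\ \mu\in\Lambda_i,\ \mu>\lambda\}$, which is precisely $(\ie_i\A\ie_i)^{>\lambda}$; hence $\ie_i r\ie_i\in(\ie_i\A\ie_i)^{>\lambda}$ and the required congruence follows, with coefficients $r_a^{(u,s)}$ inherited from \eqref{eq1.1} and therefore depending only on $a,u,s$ and not on the choice of $t\in T(\lambda,i)$. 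I do not anticipate a genuine obstacle: the argument is entirely bookkeeping, and the one point deserving care --- the identification $\ie_i\A^{>\lambda}\ie_i=(\ie_i\A\ie_i)^{>\lambda}$ --- is exactly what the parenthetical description of $(\ie_i\A\ie_i)^{>\lambda}$ in the statement is designed to provide.
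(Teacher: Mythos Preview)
Your proposal is correct and follows essentially the same route as the paper's proof: the key computation $\ie_i c_{pq}^{\mu}\ie_i$ (either $c_{pq}^{\mu}$ or $0$) yields the basis claim, the restriction of $*$ gives part~(1), and for part~(2) you multiply the straightening relation \eqref{eq1.1} by $\ie_i$ and identify $\ie_i\A^{>\lambda}\ie_i=(\ie_i\A\ie_i)^{>\lambda}$, exactly as the paper does. Your write-up is in fact a bit more explicit about the right multiplication by $\ie_i$ and the justification via Lemma~\ref{thm.1} and \eqref{eq.2}, but there is no substantive difference in approach.
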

\begin{proof}
Since $\basis $ is a basis of  $ \A $, $ \ie_i a \ie_i=a $ for all $ a \in \ie_i \A \ie_i $ and 
\begin{equation*}
\ie_i  c_{st}^\lambda \ie_i = \begin{cases}
            c_{st}^\lambda   & \text{if }  s,t \in T(\lambda,i), \\
             0  & \text{otherwise, } 
       \end{cases}
\end{equation*}
so the set $\basis_i$ is a basis of the algebra $ \ie_i \A \ie_i $. The first part follows from the fact the map $ * $ on the algebra $ \A $ leaves $ \ie_i \A \ie_i $ invariant. For the second part, from \eqref{eq1.1} we have 
\[ a c_{st}^\lambda \equiv \sum_{ u \in T(\lambda)} r_a^{(u,s)} c _{ut}^\lambda \mod \A^{ > \lambda}, \]
where $ r_a^{(u,s)} \in \F$ depends only on $ a,u$ and $s$. But $ \ie_i a=a $, so
\[ a c_{st}^\lambda \equiv \sum_{ u \in T(\lambda)} r_a^{(u,s)} \ie_i c_{ut}^\lambda \mod \A^{ > \lambda}= \sum_{ u \in T(\lambda,i)} r_a^{(u,s)} c_{ut}^\lambda \mod \A^{ > \lambda}. \]
Also by using Lemma \ref{thm.1} we can show $ \ie_i \A^{ > \lambda} \ie_i = (\ie_i \A \ie_i)^{ > \lambda} $, we are done. Moreover, cell modules $ \V $ for the algebra $\ie_i \A \ie_i$ can defined as follows:
\[ \V := \ie_i \Delta(\lambda) \;\;\; ( \lambda \in \Lambda_i). \]
The set $ \Basis_i:=\{ c_s^\lambda \mid s \in T(\lambda,i)\} $ is a basis of the module $ \V $.
\end{proof}

\hspace{15pt}
Define the algebra $ \bar{\A} $ to be $ \sum_{i \in I} \ie_i \A \ie_i $ (which is the same as $  \bigoplus_{i \in I} \ie_i \A \ie_i $ since  $ \ie_i \ie_j=0 $ for all $ i \neq j $). The identity of the algebra $ \ie_i \A \ie_i $ is the idempotent $ \ie_i $, so $ \bar{\A} \hookrightarrow \A $. Moreover, the algebra $ \bar{\A} $ turns out to be cellular with cell modules:
\[ V(\lambda, i) = \ie_i \Delta(\lambda) \;\;\; ( \lambda \in \Lambda_i, i \in I). \]
We put $ V(\lambda, i) = \{ 0 \} $ in the case $ \lambda $ is not an element in $ \Lambda_i $.

\begin{lemma} \label{thm.3}
Let $ \lambda \in \Lambda $, then
\[ \Delta ( \lambda ) = \bigoplus_{ i\in I}  V(\lambda, i)    \]
as an $\bar{\A}$-module.
\end{lemma}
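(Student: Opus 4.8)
The plan is to read everything off the decomposition $1_\A = \sum_{i\in I}\ie_i$. Since this sum acts as the identity on $\Delta(\lambda)$ we get at once $\Delta(\lambda) = \sum_{i\in I}\ie_i\Delta(\lambda)$, and $\ie_i\Delta(\lambda)$ is by definition $V(\lambda,i)$; moreover, for pairwise orthogonal idempotents summing to $1$ this sum is automatically direct (if $\sum_i \ie_i v_i = 0$ with $v_i\in\Delta(\lambda)$, apply $\ie_j$ and use $\ie_j\ie_i = \delta_{ij}\ie_j$ to get $\ie_j v_j = 0$ for every $j$). So the only real work is to check that each $\ie_i\Delta(\lambda)$ is stable under $\bar\A$ and that the $\bar\A$-action on it is the one defining the cell module $V(\lambda,i)$ of Theorem \ref{thm.2}; as a by-product I would also record the $\F$-basis of $\ie_i\Delta(\lambda)$ and match it with $T(\lambda,i)$.

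First I would determine how $\ie_i$ acts on the cell-module basis $\{c_s^\lambda\mid s\in T(\lambda)\}$. If $s\in T(\lambda,i)$, the defining property of $T(\lambda,i)$ gives $\ie_i c_{st}^\lambda = c_{st}^\lambda$ for every $t$, while Lemma \ref{thm.1} gives $\ie_j c_{st}^\lambda = 0$ for every $j\neq i$ and every $t$. In particular $\ie_i c_{st}^\lambda$ equals $c_{st}^\lambda$ or $0$ \emph{on the nose}, with no $\A^{>\lambda}$-tail, so comparing with \eqref{eq1.1} the structure constants $r_{\ie_i}^{(u,s)}$ are Kronecker deltas, and hence from \eqref{eq1.2} we obtain $\ie_i c_s^\lambda = c_s^\lambda$ for $s\in T(\lambda,i)$ and $\ie_i c_s^\lambda = 0$ otherwise. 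Therefore $\ie_i\Delta(\lambda)$ is the $\F$-span of $\{c_s^\lambda\mid s\in T(\lambda,i)\}$, which is exactly the cell module $V(\lambda,i)$ (and is $\{0\}$ precisely when $T(\lambda,i) = \emptyset$, i.e. $\lambda\notin\Lambda_i$, matching the convention). Since $T(\lambda) = \coprod_{i\in I}T(\lambda,i)$, this also re-proves the vector-space direct sum $\Delta(\lambda) = \bigoplus_{i\in I}\ie_i\Delta(\lambda)$ by partitioning the basis.

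Finally I would promote this to a decomposition of $\bar\A$-modules. Writing $a\in\bar\A = \bigoplus_{j\in I}\ie_j\A\ie_j$ as $a = \sum_j \ie_j a_j\ie_j$ and taking $v = \ie_i v\in\ie_i\Delta(\lambda)$, orthogonality gives $av = \sum_j \ie_j a_j\ie_j\ie_i v = \ie_i a_i\ie_i v\in\ie_i\A\ie_i\cdot\ie_i\Delta(\lambda)\subseteq\ie_i\Delta(\lambda)$. Thus each $\ie_i\Delta(\lambda)$ is a $\bar\A$-submodule on which the blocks $\ie_j\A\ie_j$ with $j\neq i$ act as $0$ and $\ie_i\A\ie_i$ acts exactly as in Theorem \ref{thm.2}; that is, the restriction of $\Delta(\lambda)$ to the submodule $\ie_i\Delta(\lambda)$ is precisely the $\bar\A$-module $V(\lambda,i)$. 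Together with the vector-space direct sum of the previous step this yields $\Delta(\lambda) = \bigoplus_{i\in I}V(\lambda,i)$ as $\bar\A$-modules. I do not expect a serious obstacle here: the only point requiring a little care is the first step, namely translating the defining relation of $T(\lambda,i)$ — phrased for the algebra basis $c_{st}^\lambda$ — into a statement about the cell-module basis $c_s^\lambda$ via \eqref{eq1.2}, and observing that the relevant products $\ie_i c_{st}^\lambda$ are genuine equalities rather than congruences modulo $\A^{>\lambda}$, so that the cell-module structure constants for $\ie_i$ are literally $0$ or $1$.
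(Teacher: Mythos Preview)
Your proposal is correct and follows the same approach as the paper: both arguments rest on the identity decomposition $1_\A = \sum_{i\in I}\ie_i$ with $\ie_i\ie_j = 0$ for $i\neq j$. The paper's proof is a single sentence invoking exactly these two facts, whereas you have spelled out in full the action of $\ie_i$ on the cell-module basis via \eqref{eq1.1}--\eqref{eq1.2}, the resulting basis of $\ie_i\Delta(\lambda)$, and the $\bar\A$-stability of each summand; this extra care is sound but not strictly required for the lemma as stated.
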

\begin{proof}
It comes directly from the fact that $ 1_\A= \sum\limits_{i \in I} \ie_i $ and $ \ie_i \ie_j=0 $ if $ i \neq j $.
\end{proof}

\section{Idempotent localization}

\hspace{15pt}
In this section we computethe radical and Gram matrix of each cell module of the algebra $ \A $ by using the ones of the algebra $ \bar{\A} $.

Let $ c_{us}^\lambda, c_{tv}^\lambda \in \basis $ where $ s \in T(\lambda,i) $ and $ t \in T(\lambda,j) $ for some $ i,j \in I $. If $ i \neq j $, then $ c_{us}^\lambda c_{tv}^\lambda=0 $ which means $ \langle c_s^\lambda, c_t^\lambda \rangle =0 $ in $ \Delta (\lambda) $. If $ i=j $, then
\[ c^{\lambda}_{us}c^{\lambda}_{tv} \equiv \langle c^{\lambda}_{s}, c^{\lambda}_{t} \rangle c^{\lambda}_{uv} \mod \A^{ > \lambda}.\]
Since $ u, v $ do not have a role here, we can assume $ u,v \in T(\lambda,i) $ and then 
\[ c^{\lambda}_{us}c^{\lambda}_{tv} \equiv \langle c^{\lambda}_{s}, c^{\lambda}_{t} \rangle c^{\lambda}_{uv} \mod (\ie_i \A \ie_i)^{ > \lambda}.\]
Hence the inner product $ \langle c_s^\lambda, c_t^\lambda \rangle $ in $ \Delta ( \lambda )$ and the inner product $ \langle c_s^\lambda, c_t^\lambda \rangle $ in $V(\lambda, i) $ have the same value. Let $ M(\lambda,i) $ be the Gram matrix of  this inner product on the module $ V(\lambda, i) $ with respect to the basis $ \Basis_i $, then 
\begin{align} \label{eq.3}
\Gr ( \lambda )= \bigoplus_{ i\in I_\lambda } M(\lambda,i).
\end{align}
We can show the previous result by using the facts $ \Basis = \coprod_{i \in I} \Basis_i $, $ \Basis_i\cap \Basis_j= \emptyset $ whenever $ i\neq j $ and $ \langle x,y\rangle=0 $ in $ \Delta(\lambda) $ whenever $ x \in \V $, $ y \in V(\lambda,j)$ where $ i\neq j $.

The previous equation show that $ \det \Gr ( \lambda ) \neq 0 $ if and only if $ \det M ( \lambda,i ) \neq 0 $ for each $ i \in I $ such that $ \lambda \in \Lambda_i $, then the following fact is straightforward.

\begin{theorem} \label{thm.33}
The algebra $ \A $ is semi-simple if and only if the algebra $ \ie_i \A \ie_i $ is semi-simple for each $ i $.
\end{theorem}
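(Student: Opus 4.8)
The plan is to deduce Theorem~\ref{thm.33} almost immediately from the block-diagonal decomposition \eqref{eq.3} of the Gram matrices together with part~(1) of Theorem~\ref{thm1:cell}, applied both to $\A$ and to each $\ie_i \A \ie_i$. First I would recall that, by Theorem~\ref{thm.2}, each $\ie_i \A \ie_i$ is a cellular algebra over $\F$ with poset $\Lambda_i$ and cell modules $V(\lambda,i) = \ie_i \Delta(\lambda)$ for $\lambda \in \Lambda_i$, and that $M(\lambda,i)$ is precisely the Gram matrix of the cell module $V(\lambda,i)$ of $\ie_i \A \ie_i$ with respect to the basis $\Basis_i$. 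Hence Theorem~\ref{thm1:cell}(1) applies verbatim: $\ie_i \A \ie_i$ is semi-simple if and only if $\det M(\lambda,i) \neq 0$ for every $\lambda \in \Lambda_i$.

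Next I would use \eqref{eq.3}, which states $\Gr(\lambda) = \bigoplus_{i \in I_\lambda} M(\lambda,i)$. Since the determinant of a block-diagonal matrix is the product of the determinants of its blocks, $\det \Gr(\lambda) = \prod_{i \in I_\lambda} \det M(\lambda,i)$, and this is nonzero if and only if $\det M(\lambda,i) \neq 0$ for every $i \in I_\lambda$. Running over all $\lambda \in \Lambda$, and recalling $I_\lambda = \{ i \in I \mid \lambda \in \Lambda_i \}$, we get that $\det \Gr(\lambda) \neq 0$ for all $\lambda \in \Lambda$ if and only if $\det M(\lambda,i) \neq 0$ for all $i \in I$ and all $\lambda \in \Lambda_i$.

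Finally I would combine the two equivalences. By Theorem~\ref{thm1:cell}(1), $\A$ is semi-simple $\iff$ $\det \Gr(\lambda) \neq 0$ for all $\lambda \in \Lambda$ $\iff$ (by the previous paragraph) $\det M(\lambda,i) \neq 0$ for all $i \in I$ and all $\lambda \in \Lambda_i$ $\iff$ (by the first paragraph) $\ie_i \A \ie_i$ is semi-simple for every $i \in I$. One small point worth addressing is that $\Lambda_i \neq \emptyset$ for each $i$ (already noted in the text, since $0 \neq \ie_i$ is idempotent), so the quantification "for each $i$" is not vacuous and the statement is genuinely a statement about every summand.

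I do not anticipate a real obstacle here: the content is entirely in the already-established decomposition \eqref{eq.3} and in the cellularity of $\ie_i \A \ie_i$ from Theorem~\ref{thm.2}. The only thing to be careful about is bookkeeping — making sure that the Gram matrix $M(\lambda,i)$ computed "inside $\Delta(\lambda)$" genuinely coincides with the intrinsic Gram matrix of the cell module $V(\lambda,i)$ of the algebra $\ie_i \A \ie_i$, which is exactly the computation with the congruence modulo $(\ie_i \A \ie_i)^{>\lambda}$ carried out just before \eqref{eq.3}. Once that identification is in hand, the theorem is a one-line consequence of multiplicativity of the determinant over block-diagonal matrices.
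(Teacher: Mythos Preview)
Your proposal is correct and follows exactly the paper's own approach: the paper's proof is literally the one-liner ``It comes directly from \eqref{eq.3} and from Theorem~\ref{thm1:cell},'' and your write-up simply unpacks this by invoking the block-diagonal factorisation $\det \Gr(\lambda)=\prod_{i\in I_\lambda}\det M(\lambda,i)$ together with Theorem~\ref{thm1:cell}(1) applied to both $\A$ and each $\ie_i\A\ie_i$. Your extra care about identifying $M(\lambda,i)$ with the intrinsic Gram matrix of $V(\lambda,i)$ is precisely the computation the paper carries out just before \eqref{eq.3}, so nothing is missing.
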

\begin{proof}
It comes directly from \eqref{eq.3} and from Theorem~\ref{thm1:cell}.
\end{proof}

\begin{lemma} \label{thm.4}
Let  $ \lambda \in \Lambda^0 $. The head of the module $ \Delta( \lambda ) $, denoted by $ L( \lambda ) $, satisfies the relation
\[ \dim L ( \lambda ) = \sum_{i \in I_\lambda } \dim \VL, \]
where $ \VL $ is the head of the $ \ie_i \A \ie_i $-module $ \V $. We put $ \dim \VL=0 $ if $ \lambda $ is not contained in $ \Lambda_i^0 $.
\end{lemma}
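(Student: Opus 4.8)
The plan is to use the decomposition of $\Delta(\lambda)$ from Lemma~\ref{thm.3} as a module over $\bar{\A}$, together with the compatibility between the bilinear forms established in the paragraph around \eqref{eq.3}. First I would observe that $L(\lambda) = \Delta(\lambda)/\Rad(\Delta(\lambda))$ and that, since $\bar{\A} \subseteq \A$, both $\Delta(\lambda)$ and $\Rad(\Delta(\lambda))$ restrict to $\bar{\A}$-modules. The key point is that $\Rad(\Delta(\lambda))$, defined as the radical of the bilinear form $\langle\;,\;\rangle$ on $\Delta(\lambda)$, decomposes compatibly with $\Delta(\lambda) = \bigoplus_{i \in I} V(\lambda,i)$: because $\langle x, y \rangle = 0$ whenever $x \in V(\lambda,i)$ and $y \in V(\lambda,j)$ with $i \neq j$, and because on each $V(\lambda,i)$ the form agrees with the cellular form of the $\ie_i \A \ie_i$-module $V(\lambda,i)$, we get
\[ \Rad(\Delta(\lambda)) = \bigoplus_{i \in I} \Rad(V(\lambda,i)), \]
where $\Rad(V(\lambda,i))$ is the radical of the form on the $\ie_i \A \ie_i$-cell module (and is all of $V(\lambda,i)=\{0\}$ when $\lambda \notin \Lambda_i$). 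This is essentially the block-matrix statement \eqref{eq.3} read at the level of kernels.

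Next I would take the quotient. Since the direct sum decomposition of $\Delta(\lambda)$ and of $\Rad(\Delta(\lambda))$ are compatible,
\[ L(\lambda) = \Delta(\lambda)/\Rad(\Delta(\lambda)) \cong \bigoplus_{i \in I} V(\lambda,i)/\Rad(V(\lambda,i)) = \bigoplus_{i \in I_\lambda} \overline{V}(\lambda,i) \]
as $\bar{\A}$-modules, where by Theorem~\ref{thm1:cell}(2) applied to the cellular algebra $\ie_i \A \ie_i$ each $\overline{V}(\lambda,i) = V(\lambda,i)/\Rad(V(\lambda,i))$ is the head of $V(\lambda,i)$, which is nonzero precisely when $\lambda \in \Lambda_i^0$. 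Taking dimensions of both sides gives
\[ \dim L(\lambda) = \sum_{i \in I_\lambda} \dim \overline{V}(\lambda,i), \]
which is the claim (the terms with $\lambda \notin \Lambda_i^0$ contribute $0$ by convention).

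The main thing to get right — the only place where care is really needed — is the identification $\Rad(\Delta(\lambda)) = \bigoplus_i \Rad(V(\lambda,i))$. One has to check that the form on $\Delta(\lambda)$ restricted to $V(\lambda,i)\times V(\lambda,i)$ really is the cellular form of the $\ie_i\A\ie_i$-module $V(\lambda,i)$ (this is exactly the computation done just before \eqref{eq.3}, using that $u,v$ may be chosen in $T(\lambda,i)$ and that $\ie_i \A^{>\lambda}\ie_i = (\ie_i\A\ie_i)^{>\lambda}$), and that cross terms vanish (also noted there, since $c_{us}^\lambda c_{tv}^\lambda = 0$ when $s,t$ belong to different $T(\lambda,i)$). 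Given that, the kernel of a block-diagonal form is the direct sum of the kernels of the blocks, and the dimension count follows. I would also remark that $\lambda \in \Lambda^0$ forces $\Lambda_i^0 \neq \emptyset$ for at least one $i \in I_\lambda$, so the right-hand side is genuinely nonzero, consistent with $L(\lambda)$ being a nonzero irreducible.
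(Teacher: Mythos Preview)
Your proof is correct and rests on the same key fact as the paper's, namely the block-diagonal decomposition \eqref{eq.3} of the bilinear form. The paper's argument is the one-line rank computation $\dim L(\lambda)=\rank \Gr(\lambda)=\sum_{i\in I_\lambda}\rank M(\lambda,i)=\sum_{i\in I_\lambda}\dim\VL$, whereas you read \eqref{eq.3} at the level of kernels to get $\Rad(\Delta(\lambda))=\bigoplus_i\Rad(V(\lambda,i))$ first and then pass to quotients. Your route thus proves a little more along the way --- the radical decomposition is exactly Theorem~\ref{thm.5}, and your $\bar{\A}$-module identification $L(\lambda)\cong\bigoplus_{i\in I_\lambda}\VL$ is Corollary~\ref{thm.6} --- while the paper separates these out and keeps the present lemma to pure dimension-counting.
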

\begin{proof}
This follows from the fact that $ \dim L ( \lambda ) = \rank ( \Gr ( \lambda) )  $ as the algebra is over a field and $ \lambda \in  \Lambda ^0  $ and by using \eqref{eq.3}.
\end{proof}

\begin{theorem} \label{thm.5}
Let $ \lambda \in \Lambda$, then
\begin{align*}
\Rad( \Delta( \lambda )) \cong \bigoplus_{ i\in I_\lambda} \Rad( V(\lambda,i)) 
\end{align*}
as a vector space and 
\begin{align*}
\Rad( \Delta( \lambda )) \cong \sum\limits_{ i\in I_\lambda } \Rad( V(\lambda,i))
\end{align*}
as an $\A$-module.
\end{theorem}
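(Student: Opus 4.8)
The plan is to build on the decomposition $\Delta(\lambda) = \bigoplus_{i \in I} V(\lambda,i)$ from Lemma~\ref{thm.3} together with the Gram matrix identity $\Gr(\lambda) = \bigoplus_{i \in I_\lambda} M(\lambda,i)$ from \eqref{eq.3}. First I would recall that, since the bilinear form on $\Delta(\lambda)$ vanishes between distinct summands $V(\lambda,i)$ and $V(\lambda,j)$ (as observed before \eqref{eq.3}), the radical of the form on $\Delta(\lambda)$ is computed blockwise: an element $x = \sum_{i} x_i$ with $x_i \in V(\lambda,i)$ lies in $\Rad(\Delta(\lambda))$ if and only if $\langle x_i, y_i \rangle = 0$ for every $i$ and every $y_i \in V(\lambda,i)$, i.e.\ if and only if each $x_i \in \Rad(V(\lambda,i))$. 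This gives the vector space isomorphism $\Rad(\Delta(\lambda)) \cong \bigoplus_{i \in I_\lambda} \Rad(V(\lambda,i))$ immediately, and in fact shows it is an identification of subspaces of $\Delta(\lambda)$, not merely an abstract isomorphism.

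For the second statement, I would first note that $\Rad(\Delta(\lambda))$ is an $\A$-submodule of $\Delta(\lambda)$ (stated in the excerpt, following \cite{GL1}), hence in particular an $\bar{\A}$-submodule. Under the decomposition $\Delta(\lambda) = \bigoplus_i V(\lambda,i)$ as $\bar{\A}$-modules, the subspace $\Rad(\Delta(\lambda))$ decomposes as $\bigoplus_i \big(\Rad(\Delta(\lambda)) \cap V(\lambda,i)\big)$, and by the blockwise description above this intersection is exactly $\Rad(V(\lambda,i))$. Thus $\Rad(\Delta(\lambda)) = \sum_{i \in I_\lambda} \Rad(V(\lambda,i))$ as literal subsets of $\Delta(\lambda)$, where each $\Rad(V(\lambda,i))$ sits inside $\Delta(\lambda)$ via the embedding $V(\lambda,i) = \ie_i\Delta(\lambda) \hookrightarrow \Delta(\lambda)$; the sum is direct and is an equality of $\bar{\A}$-modules, and since $\Rad(\Delta(\lambda))$ is an $\A$-submodule the right-hand side inherits the $\A$-module structure. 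The point of writing $\sum$ rather than $\bigoplus$ in the second display is precisely that the $\A$-action mixes the summands $V(\lambda,i)$, so the decomposition is only as vector spaces / $\bar{\A}$-modules, while the equality of underlying sets upgrades to an $\A$-module statement.

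The main obstacle, such as it is, is bookkeeping rather than mathematics: one must be careful that $\Rad(V(\lambda,i))$ is understood with respect to the form on $V(\lambda,i)$ whose Gram matrix is $M(\lambda,i)$, and that this form agrees with the restriction of the form on $\Delta(\lambda)$ — which is exactly the content of the computation $c^\lambda_{us}c^\lambda_{tv} \equiv \langle c^\lambda_s, c^\lambda_t\rangle c^\lambda_{uv} \bmod (\ie_i\A\ie_i)^{>\lambda}$ carried out before \eqref{eq.3}. Once that compatibility is in hand, both isomorphisms follow formally; I would also remark that when $\lambda \notin \Lambda_i$ the convention $V(\lambda,i) = \{0\}$ makes the index sets $I$ and $I_\lambda$ interchangeable in the sums. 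No genuinely hard step arises, so I would keep the proof to a few lines citing Lemma~\ref{thm.3}, equation~\eqref{eq.3}, and the submodule property of the radical.
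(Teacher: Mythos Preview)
Your proposal is correct and rests on the same ingredients as the paper's proof: the blockwise orthogonality of the form between distinct $V(\lambda,i)$ and $V(\lambda,j)$, the Gram-matrix decomposition \eqref{eq.3}, and the fact that $\Rad(\Delta(\lambda))$ is an $\A$-submodule. The only difference is organizational: the paper establishes the first isomorphism by a dimension count (using $\dim L(\lambda) = \rank \Gr(\lambda)$ from Lemma~\ref{thm.4}) and then proves the inclusion $\sum_i \Rad(V(\lambda,i)) \subseteq \Rad(\Delta(\lambda))$ separately before invoking the dimension equality, whereas you go straight to the element-wise characterization $x = \sum_i x_i \in \Rad(\Delta(\lambda)) \iff x_i \in \Rad(V(\lambda,i))$ for all $i$, which yields both statements at once as an identification of subspaces. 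Your route is slightly more direct and avoids the detour through Lemma~\ref{thm.4}; the paper's route makes the numerics explicit but is otherwise the same argument.
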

\begin{proof}
First part comes directly from the fact that they have the same dimension:
\begin{align*}
\dim \Rad( \Delta( \lambda )) &= \dim \Delta( \lambda ))-\dim L(\lambda),\\
 &= \sum\limits_{i\in I} \dim V(\lambda,i)-\rank \big( \bigoplus_{ i\in I_\lambda} M(\lambda,i) \big) \\
  &=\sum\limits_{i\in I_\lambda} \big( \dim V(\lambda,i)-\rank M(\lambda,i) \big),\\
  &= \sum\limits_{i\in I_\lambda} \dim \Rad( V(\lambda,i)).
\end{align*}
Note that $ \V=\{0\} $ if $ i $ is not in $ I_\lambda $.

Next part is coming from the fact that the basis $ \Basis $ of the module  $ \Delta( \lambda ) $ equals $\amalg_{i\in I} \Basis_i $ and $ \Basis_i=\{ c_s^\lambda \mid s \in T(\lambda,i) \} $ is a basis the module $ \V $, also $ \langle c_s^\lambda , c_t^\lambda \rangle=0 $ whenever $ s \in T(\lambda,i) $ and $ t \in T(\lambda,j) $ such that $ i \neq j $. Let $ x \in \Rad( V(\lambda,i)) $ for some $i \in I_\lambda$, so $ \langle c_s^\lambda , x \rangle=0 $ for all $ s \in T(\lambda,i) $. Moreover, it is clear that $ \langle c_t^\lambda , x \rangle=0 $ for all $ t \in T(\lambda,j) $ where $ i \neq j $, then $ x \in  \Rad( \Delta( \lambda )) $. Thus
\[   \sum\limits_{ i\in I_\lambda} \Rad( V(\lambda,i)) \subseteq  \Rad ( \Delta_n( \lambda )  ) , \]
but both of them have the same dimension thus they are identical. 
\end{proof}

\begin{corollary} \label{thm.6}
Let $ \lambda \in \Lambda^0 $, then
\[ L(\lambda) \cong \sum\limits_{ i\in I_\lambda } \VL ,\]
as an $\A$-module.
\end{corollary}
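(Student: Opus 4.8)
The plan is to push the decomposition $\Delta(\lambda)=\bigoplus_{i\in I_\lambda} V(\lambda,i)$ of Lemma~\ref{thm.3} through the canonical projection $\pi\colon\Delta(\lambda)\twoheadrightarrow L(\lambda)=\Delta(\lambda)/\Rad(\Delta(\lambda))$ and to identify the image of each summand with $\overline{V}(\lambda,i)$. Since $\ker\pi=\Rad(\Delta(\lambda))$, the essential point is to compute $V(\lambda,i)\cap\Rad(\Delta(\lambda))$ for each $i\in I_\lambda$.

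First I would record, combining Theorem~\ref{thm.5} with Lemma~\ref{thm.3}, that $\Rad(\Delta(\lambda))=\bigoplus_{i\in I_\lambda}\Rad(V(\lambda,i))$, where each $\Rad(V(\lambda,i))$ is regarded as a subspace of $V(\lambda,i)\subseteq\Delta(\lambda)$; this sum is direct because it lies inside the direct sum $\bigoplus_{i\in I_\lambda}V(\lambda,i)$. From directness one gets immediately that $V(\lambda,i)\cap\Rad(\Delta(\lambda))=\Rad(V(\lambda,i))$: the inclusion $\supseteq$ is clear, and for $\subseteq$, writing $x\in V(\lambda,i)$ as $\sum_j y_j$ with $y_j\in\Rad(V(\lambda,j))$ and comparing components in $\bigoplus_j V(\lambda,j)$ forces $x=y_i\in\Rad(V(\lambda,i))$.

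Then $\pi$ restricted to $V(\lambda,i)$ is an $\ie_i\A\ie_i$-module map with kernel $\Rad(V(\lambda,i))$, so $\pi(V(\lambda,i))\cong V(\lambda,i)/\Rad(V(\lambda,i))=\overline{V}(\lambda,i)$ as $\ie_i\A\ie_i$-modules, hence as $\bar{\A}$-modules and as vector spaces. Since $\pi$ is surjective and $\Delta(\lambda)=\sum_{i\in I_\lambda}V(\lambda,i)$, we obtain $L(\lambda)=\pi(\Delta(\lambda))=\sum_{i\in I_\lambda}\pi(V(\lambda,i))=\sum_{i\in I_\lambda}\overline{V}(\lambda,i)$ as subspaces of $L(\lambda)$, which is the asserted decomposition of the $\A$-module $L(\lambda)$, read exactly as in Theorem~\ref{thm.5}. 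A dimension count against Lemma~\ref{thm.4} confirms that the sum is in fact direct as a vector space.

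I do not anticipate a real obstacle; the argument is bookkeeping with the two compatible direct-sum decompositions. The one point deserving care is the meaning of ``$\cong$ as an $\A$-module'': the individual summands $\overline{V}(\lambda,i)$ carry only an $\ie_i\A\ie_i$-action, so the statement must be interpreted — as in Theorem~\ref{thm.5} — as an internal decomposition of the $\A$-module $L(\lambda)$ into a sum of subspaces each isomorphic to the corresponding $\overline{V}(\lambda,i)$, not as an external direct sum of $\A$-modules. Spelling this out is the only subtlety.
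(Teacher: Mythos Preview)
Your argument is correct and follows essentially the same route as the paper's proof, which is a one-line application of the fact that $\Delta(\lambda)=\bigoplus_i V(\lambda,i)$ and $\Rad(\Delta(\lambda))=\bigoplus_i\Rad(V(\lambda,i))$ give $L(\lambda)=\sum_i V(\lambda,i)/\Rad(V(\lambda,i))$. Your version simply spells out the intersection $V(\lambda,i)\cap\Rad(\Delta(\lambda))=\Rad(V(\lambda,i))$ and the meaning of the $\A$-module isomorphism more carefully than the paper does.
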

\begin{proof}
As $ \V \cap V(\lambda,j) = \{ 0 \} $ whenever $ i \neq j $, so
\[ L( \lambda ) = \sum\limits_{ i\in I_\lambda} \frac{   \V }{  \Rad ( \V) } \cong \sum\limits_{ i\in I_\lambda} \VL.  \qedhere \]
\end{proof}

\section{The block decomposition of $\A$}

\hspace{15pt}
The aim of this section is to describe the blocks of the algebra $\A$ over a field $\F$ by studying the homomorphisms between cell modules of $ \A $.

We say $ \lambda \in \Lambda $ and $ \mu \in \Lambda^0 $ are cell-linked if $ d_{ \lambda \mu}\neq 0  $. A cell-block of $\A$ is an equivalence class of the equivalence relation on $ \Lambda$ generated by this cell-linkage. From Theorem \ref{thm1:cell}, each block of $ \A $ is an intersection of a cell-block with $\Lambda^0$, see \cite{GL1}. Thus, if there a non-zero homomorphism between $ \Delta(\lambda) $ and $ \Delta (\mu) $ where $ \lambda, \mu \in \Lambda^0 $, then they belong to the same block.

Let $ \theta : \Delta ( \lambda ) \rightarrow \Delta( \mu ) $ be a homomorphism defined by $ c_{s}^\lambda \mapsto \sum_{u \in T( \mu)} \alpha_u c_{u}^\mu $. Now if $ s \in T(\lambda,i) $ for some $ i \in I $, then $ u \in T(\mu,i) $ since $ \theta ( c_{s}^\lambda) = \theta ( \ie_i c_{s}^\lambda) = \sum_{u \in T( \mu)} \alpha_u \ie_i c_{u}^\mu $, so 
\[ \theta ( c_{s}^\lambda) =  \sum\limits_{u \in T( \mu,i)} \alpha_u c_{u}^\mu .\]
Hence the map $ \theta $ can be restricted to define a homomorphism
\[ \theta\downarrow_{\ie_i \A \ie_i} : V ( \lambda,i ) \rightarrow V( \mu,i) \]
Now if $ \theta \neq 0 $, then there is $ c_{s}^\lambda $ such that $ \theta ( c_{s}^\lambda) \neq 0 $. Assume that $ s \in T(\lambda,i) $ for some $ i $, then $ \theta\downarrow_{\ie_i \A \ie_i} \neq 0 $, which means that both the sets $ T(\lambda,i), T(\mu,i) $ don't equal the empty set.

\hspace{15pt}
Let $ \lambda, \mu \in \Lambda_i $ for some $ i $, and $ \tau : V ( \lambda,i ) \rightarrow V( \mu,i ) $ be a homomorphism $ \ie_i \A \ie_i $-modules. By extending the map $ \tau $, we obtain a homomorphism $ \tau \uparrow^{\A} : \Delta ( \lambda ) \rightarrow \Delta( \mu ) $. Thus 
\[ \Hom_{\A} ( \Delta ( \lambda ) , \Delta_n ( \mu))= \{ 0 \} \]
 if and only if 
\[  \Hom_{\ie_i \A \ie_i } (  \V , V( \mu, i) ) = \{ 0 \} \]
 for each $ i \in I $. From this fact, we obtain the next theorem.
 
 \begin{theorem} \label{thm.7}
 Let $ \Lambda = \Lambda^0 $. Two weights $ \lambda $ and $ \mu $ in $ \Lambda $ are in the same block of $ \A $ if and only if there exist $ \nu_0, \dots , \nu_r $ in $\Lambda$ such that all the following hold:
 \begin{enumerate}
 \item  $ \lambda $ and $ \nu_0 $ are in the same cell-block of $ \ie_i \A \ie_i $ for some $ i \in I $.
  \item  For each $ j=0,\dots, r-1 $, $ \nu_j $ and $ \nu_{j+1} $ are in the same cell-block of $ \ie_i \A \ie_i $ for some $ i \in I $.
   \item  $ \mu $ and $ \nu_r $ are in the same cell-block of $ \ie_i \A \ie_i $ for some $ i \in I. $
 \end{enumerate} 
 \end{theorem}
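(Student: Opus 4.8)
The plan is to reduce the "same block" relation on $\A$ (which, since $\Lambda=\Lambda^0$, coincides with the cell-linkage equivalence generated by nonzero homomorphisms between cell modules $\Delta(\lambda)$) to a chain condition built out of the cell-block relations of the individual algebras $\ie_i\A\ie_i$. The key technical input has already been assembled in the discussion preceding the theorem: for $\lambda,\mu\in\Lambda$, one has $\Hom_{\A}(\Delta(\lambda),\Delta(\mu))\neq\{0\}$ if and only if $\Hom_{\ie_i\A\ie_i}(V(\lambda,i),V(\mu,i))\neq\{0\}$ for \emph{some} $i\in I$ — the restriction $\theta\downarrow_{\ie_i\A\ie_i}$ furnishes one direction and the extension $\tau\uparrow^{\A}$ the other. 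So the atomic step "$\lambda$ and $\mu$ are directly cell-linked in $\A$ via a nonzero hom in one direction" is equivalent to "$\lambda,\mu\in\Lambda_i$ and $V(\lambda,i),V(\mu,i)$ are directly cell-linked in $\ie_i\A\ie_i$ for some $i$."

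First I would set up notation carefully: let $\sim_{\A}$ be the equivalence relation on $\Lambda$ generated by $\lambda\leftrightarrow\mu$ whenever $\Hom_{\A}(\Delta(\lambda),\Delta(\mu))\neq\{0\}$ or $\Hom_{\A}(\Delta(\mu),\Delta(\lambda))\neq\{0\}$, and for each $i$ let $\sim_i$ be the cell-linkage equivalence on $\Lambda_i$ for the cellular algebra $\ie_i\A\ie_i$ (Theorem~\ref{thm.2}), both understood via the decomposition-matrix description from Theorem~\ref{thm1:cell}. Since $\Lambda=\Lambda^0$, blocks of $\A$ are exactly the $\sim_{\A}$-classes (as recalled at the start of the section). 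The right-hand side of the theorem defines a relation: write $\lambda\approx\mu$ if there is a finite chain $\lambda=\nu_{-1},\nu_0,\nu_1,\dots,\nu_r,\nu_{r+1}=\mu$ (re-indexing the statement's $\nu_0,\dots,\nu_r$ together with the endpoints) such that each consecutive pair lies in a common cell-block of some $\ie_i\A\ie_i$. It is immediate that $\approx$ is an equivalence relation (reflexivity uses $\Lambda_i\neq\emptyset$ and that any $\lambda$ lies in some $\Lambda_i$ by $T(\lambda)=\coprod_i T(\lambda,i)$; symmetry and transitivity are concatenation of chains). So the whole statement amounts to $\sim_{\A}\;=\;\approx$.

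For the inclusion $\approx\;\subseteq\;\sim_{\A}$: it suffices to show that if $\nu_j\sim_i\nu_{j+1}$ for some $i$, then $\nu_j\sim_{\A}\nu_{j+1}$. Unwinding $\sim_i$: there is a chain in $\Lambda_i$ along which consecutive cell modules of $\ie_i\A\ie_i$ admit a nonzero hom (in one direction or the other); applying $\tau\uparrow^{\A}$ to each such hom produces a nonzero hom between the corresponding $\Delta$'s over $\A$, hence each consecutive pair is $\sim_{\A}$-linked, and transitivity of $\sim_{\A}$ finishes it. For the reverse inclusion $\sim_{\A}\;\subseteq\;\approx$: it suffices to show that if $\Hom_{\A}(\Delta(\lambda),\Delta(\mu))\neq\{0\}$ then $\lambda\approx\mu$. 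By the equivalence recalled above, there is an $i$ with $\Hom_{\ie_i\A\ie_i}(V(\lambda,i),V(\mu,i))\neq\{0\}$ and $\lambda,\mu\in\Lambda_i$; hence $\lambda$ and $\mu$ are cell-linked (in one step) in $\ie_i\A\ie_i$, so a length-zero chain witnesses $\lambda\approx\mu$. Then $\sim_{\A}$, being the equivalence relation generated by such links, is contained in $\approx$ because $\approx$ is already an equivalence relation containing all the generators.

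The main obstacle — really the one place that needs care rather than bookkeeping — is making rigorous that a \emph{cell-block} linkage inside $\ie_i\A\ie_i$ (defined via $d_{\lambda\mu}\neq0$ in the decomposition matrix of $\ie_i\A\ie_i$) is generated by, and generates, nonzero homomorphisms between the cell modules $V(\lambda,i)$, so that the transfer maps $\theta\downarrow$ and $\tau\uparrow$ actually apply. One should invoke the standard cellular-algebra fact (implicit in Theorem~\ref{thm1:cell} and the Graham–Lehrer block description) that two weights lie in the same cell-block iff they are connected by a chain of weights with consecutive nonzero decomposition numbers, and that $d_{\lambda\mu}\neq 0$ with $\lambda>\mu$ (say) forces $\Hom(\Delta(\mu),\Delta(\lambda))\neq 0$ after passing through $L(\mu)$ — or, more cleanly, that the cell-block relation equals the equivalence relation generated by nonzero homs between cell modules, a statement that holds for $\ie_i\A\ie_i$ exactly as for $\A$ (here using $\Lambda_i=\Lambda_i^0$? — note we must check whether $\Lambda=\Lambda^0$ implies $\Lambda_i=\Lambda_i^0$, which follows from \eqref{eq.3} since a nonzero Gram form on $\Delta(\lambda)$ forces some $M(\lambda,i)\neq 0$, but possibly not \emph{every} $M(\lambda,i)\neq 0$; this subtlety should be addressed, perhaps by restricting the chains in the statement to run through $\bigcup_i\Lambda_i^0$, or by observing $d$-linkage in $\ie_i\A\ie_i$ only ever involves weights in $\Lambda_i^0$ on the $L$-side). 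Once that identification is in place, everything else is the routine two-sided chain-concatenation argument sketched above.
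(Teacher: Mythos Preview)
Your approach matches the paper's, which gives no proof beyond a \qedhere\ after the restriction/extension discussion establishing that $\Hom_{\A}(\Delta(\lambda),\Delta(\mu))=\{0\}$ iff $\Hom_{\ie_i\A\ie_i}(V(\lambda,i),V(\mu,i))=\{0\}$ for every $i$. You have in fact been more careful than the paper: the identification of cell-block linkage (defined there via decomposition numbers $d_{\lambda\mu}\neq 0$) with the equivalence relation generated by nonzero $\Hom$'s between cell modules, and the question whether $\Lambda=\Lambda^0$ forces each $\Lambda_i=\Lambda_i^0$, are genuine subtleties that the paper simply leaves unaddressed.
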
$ \qedhere $

\section{Examples}

\hspace{15pt} 
In this section, we use some simple example to illustrate the facts that have been showed in the previous sections.

Let $ \A=M_{n\times n} (\F) $ be an $ n\times n$ matrix algebra over $\F$. This algebra is cellular with indexing set $ \Lambda=\{n\} $ and $ I=T(n)=\{1, \dots , n\} $. For each $ i,j \in T(n) $, we take $ c_{ij}^n=E_{ij} $ where $ E_{ij} $ is the matrix with $ 1 $ at the $ (i,j) $-entry and $ 0 $ elsewhere.  As we have $ 1_\A=\sum_{i\in I} E_{ii} $ and the elements $ E_{ii} $ satisfy all the assumptions in \ref{ass}, thus we can apply our results from the previous sections. Note that $ E_{ii} \A E_{ii} $ is isomorphic to $ \F $ for each $ i $, so $ \A $ is semi-simple see Theorem \ref{thm.33}.

For the second example, let $\A$ be the algebra which is given by the quiver 
\[ 1 \mathrel{ \mathop{ \rightleftarrows }^{ a_{12} }_{ a_{21} }  } 2 \]
 with the relation $ a_{12}a_{21}a_{12}= a_{21}a_{12}a_{21}=0 $. 
The algebra is spanned by the elements
\[e_1,e_2, a_{12},a_{21},a_{12}a_{21}, a_{21}a_{12},\]
where $ e_i $ is the path of length zero on the vertex $i$. As left module $\A$ is isomorphic to
\[ \F\langle e_1, a_{21}, a_{12}a_{21} \rangle \oplus \F\langle e_2, a_{12}, a_{21}a_{12} \rangle.\]

The algebra $ \A $ is a cellular algebra with anti-automorphism defined by $ a_{ij}^*=a_{ji} $ and $ \Lambda=\{ \lambda_0,\lambda_1,\lambda_2\} $ where $ \lambda_0>\lambda_1>\lambda_2 $ and
\begin{align*}
T(\lambda_0)=\{1\}, \;\; T(\lambda_1)=\{1,2\},T(\lambda_2)=\{2\}.
 \end{align*}
We define
\begin{align*}
c_{11}^{\lambda_0}=a_{12}a_{21}, \;\;  \left(\begin{array}{cc} c_{11}^{\lambda_1} & c_{1\,2}^{\lambda_1} \\
c_{2\,1}^{\lambda_1} & c_{2\,2}^{\lambda_1} \end{array} \right) = \left(\begin{array}{cc} e_1 & a_{12}  \\
a_{21} & a_{21} a_{12}\end{array} \right), \;\; c_{22}^{\lambda_2}=e_2.
 \end{align*}
The set $ \basis =\{c_{st}^{\lambda} \mid s,t \in T(\lambda) \text{ for some } \lambda \in \Lambda  \}$ is a cellular basis of $ \A$. Note that $ \lambda_0 $ is not in $ \Lambda^0 $ although $ \Delta (\lambda_0) $ is simple.

The identity $ 1_\A $ equals $ e_1+e_2$ and this decomposition satisfies all the conditions in Assumption \ref{ass}. Also we have
\begin{align*}
e_1 \A e_1= \F\langle e_1, a_{12}a_{21} \rangle \;\; \;\; \;\; \;\; \Lambda_1=\{ \lambda_0, \lambda_1\},\\
e_2 \A e_2= \F\langle e_2, a_{21}a_{12} \rangle \;\; \;\; \;\; \;\; \Lambda_2=\{ \lambda_1, \lambda_2\}.
 \end{align*}

Note that $ J= \F \langle a_{12} a_{21} \rangle $ is a nilpotent ideal of $ e_1 \A e_1 $ and $ J' = \F \langle a_{21}a_{12} \rangle $ is a nilpotent ideal of $ e_2 \A e_2 $, so $\A$ is not semi-simple, from Theorem \ref{thm.33}. Let $ \Basis=\{ e_1, a_{21} \} $ be a basis of the module $ \Delta(\lambda_1) $, so $ V(\lambda_1,1)=\F\langle e_1 \rangle  $ and $ V(\lambda_1,2)=\F\langle a_{21} \rangle  $. Also we have $ V(\lambda_2,1)=\{ 0\}  $ and $ V(\lambda_2,2)=\F\langle e_2 \rangle  $. It is easy to show that $ V(\lambda_2,2),  V(\lambda_1,2)$ are isomorphic as $ e_2 \A e_2 $, so they are cell-linked. From Theorem \ref{thm.7}, the modules $ \Delta(\lambda_1) $ and $ \Delta(\lambda_2) $ are in the same block of $\A$. Moreover, 
\begin{align*}
Rad ( \Delta (\lambda_1)) &=Rad ( \F\langle e_1, a_{21} \rangle) \cong  Rad ( V( \lambda_1,1)+ V( \lambda_1,2))\\
&= V( \lambda_1,2) \cong V( \lambda_2,2) = \Delta (\lambda_2).
 \end{align*}
 
\subsection{The multi-colour partition algebra}

 \hspace{15pt}
 For $n \in \mathbb{N}$, the symbol $ \mathcal{P}_n$ denotes the set of all partitions of the set $\underline{n} \cup \underline{n'}$, where $\underline{n}=\{1, \dots , n \}$ and $\underline{n'}=\{ 1' ,  \dots , n' \}$.

Each individual set partition can be represented by a graph, as it is described in \cite{M1}.  Any diagrams are regarded as the same diagram if they representing the same partition.

Now the composition $\beta \circ \alpha $ in $\mathcal{P}_n$, where $\alpha, \beta \in \mathcal{P}_n$, is the partition obtained by placing $ \alpha $ above $ \beta $, identifying the bottom vertices of $ \alpha $ with the top vertices of $ \beta $, and ignoring any connected components that are isolated from boundaries. This product on $\mathcal{P}_n$ is associative and well-defined up to equivalence.

A $(n_1, n_2)$-partition diagram  for any $ n_1, n_2 \in \mathbb{N}^+ $ is  a diagram representing a set partition of the set  $\underline{n_1} \cup \underline{n_2'} $ in the obvious way.

The product on $\mathcal{P}_n$ can be generalised to define a product of $(n,m)$-partition diagrams when it is defined. For example, see the following figure.
 \begin{center}
 \begin{tikzpicture}
          \draw[gray]  (-4,0.35) rectangle (-2.5,1);
           \draw (-2.75,0.35)--(-3.5,1);
           \draw (-3.75,0.35)--(-3,1);                          
          \draw [domain=0:180] plot ({0.25*cos(\x)-3.25}, {0.15*sin(\x)+0.35 });
          
          \node[above] at (-2.3 ,0.35) {\sffamily $ \circ $};
          
          \draw[gray]  (-2,0.35) rectangle (-0.7,1);
           \draw (-1.25,0.35)--(-1,1);
           \draw (-1.5,0.35)--(-1.25,1);         
           \draw [ domain=360:180] plot ({0.25*cos(\x)-1.5}, {0.15*sin(\x)+1});                    
          
\node[above] at (-0.3,0.35) {\sffamily = };
          \draw[gray]    (0,0) rectangle (1.5,0.65);
   \draw[gray]  (0,0.75) rectangle (1.5,1.4);
          \draw (0.25,0)--(0.75, 0.65 ) ;
          \draw (0.5,0.65 )--(1.25,0) ;
          \draw [domain=0:180] plot ({0.25*cos(\x)+0.825}, {0.15*sin(\x)});
           \draw (0.5,0.75)--(0.9, 1.4 );
           \draw (0.75,0.75)--(1.2, 1.4 );
           \draw [ domain=360:180] plot ({0.3*cos(\x)+0.6}, {0.15*sin(\x)+ 1.4 });  

\node[above] at (1.75,0.35) {\sffamily = };

          \draw[gray]  (2,0.35) rectangle (3.5,1);

           \draw (2.25,0.35)--(3.2,1);
           \draw (3.25,0.35)--(2.9,1);         
           \draw [ domain=360:180] plot ({0.3*cos(\x)+2.6}, {0.15*sin(\x)+1});                    
          \draw [domain=0:180] plot ({0.25*cos(\x)+2.825}, {0.15*sin(\x)+0.35 });
\end{tikzpicture}
 \end{center}

 \hspace{15pt}
Let $ n,m $ be positive integers, $ \Cc_0 ,   \dots , \Cc_{m-1} $ be different colours where none of them is white, and  $ \delta_0,  \dots  ,  \delta_{m-1}  $ be scalars corresponding to these colours.

Define the set $ \Phi^{n,m} $ to be 
\[ \{ (A_0, \dots , A_{m-1}) \mid \{ A_0, \dots, A_{m-1} \} \in \mathcal{P}_n \}.\]

Let $ (A_0, \dots , A_{m-1}) \in \Phi^{n,m} $ (note that some of these subsets can be an empty set). Define $  \mathcal{P}_{A_0 , \ldots ,  A_{m-1}} $ to be the set $ \prod_{i=0}^{m-1} \mathcal{P}_{A_i}  $, where $ \mathcal{P}_{A_i}  $ is the set of all partitions of $ A_i$, and 
\begin{align*}
\ppm :=&  \bigcup\limits_{ (A_0, \dots , A_{m-1}) \in \Phi^{n,m} }  \mathcal{P}_{A_0 ,  \ldots ,  A_{m-1}} .
\end{align*}

The element $ d= (d_0, \dots , d_{m-1} ) \in \prod_{i=0}^{m-1} \mathcal{P}_{A_i} $ can be represented by the same diagram of the partition $ \cup_{i=0}^{m-1} d_i \in \mathcal{P}_{n} $ after colouring it as follows: we use the colour $ \Cc_{i} $ to draw all the edges and the nodes in the partition $ d_i $.

A diagram represents an element in $ \ppm $ is not unique.  We say two diagrams are equivalent if they represent the same tuple of partitions. The term multi-colour partition diagram will be used to mean an equivalence class of a given diagram. For example, the following diagrams
\begin{center}
\begin{tikzpicture}
          \draw[gray] (0,0) rectangle (1.25,0.75);
           \fill[red] (0.25,0)  circle[radius=1.5pt];
            \fill[red] (0.25,0.75)  circle[radius=1.5pt];
            \fill[blue] (0.5,0)  circle[radius=1.5pt];
            \fill[blue] (0.5, 0.75)  circle[radius=1.5pt];
            \fill[blue] (0.75,0)  circle[radius=1.5pt];
            \fill[red] (0.75, 0.75)  circle[radius=1.5pt];
            \fill[red] (1, 0.75)  circle[radius=1.5pt];
            \fill[blue] (1,0)  circle[radius=1.5pt];
            \draw[red] (0.25,0)--(0.25, 0.75 );
            \draw[blue] (0.5, 0.75) .. controls ( 0.6,0.3) and (0.95,0.3) .. (1, 0);
            \draw [domain=180:360][red] plot ({0.25*cos(\x)+0.5}, {0.2*sin(\x)+0.75});
            \draw[ domain=0:180][blue] plot ({0.125*cos(\x)+0.625}, {0.15*sin(\x)+0});
            \draw[ domain=0:180][blue] plot ({0.125*cos(\x)+0.875}, {0.15*sin(\x)+0});
\end{tikzpicture} 
\;\;\;\;\;\;\;
\begin{tikzpicture}
          \draw[gray] (0,0) rectangle (1.25,0.75);
           \fill[red] (0.25,0)  circle[radius=1.5pt];
            \fill[red] (0.25,0.75)  circle[radius=1.5pt];
            \fill[blue] (0.5,0)  circle[radius=1.5pt];
            \fill[blue] (0.5, 0.75)  circle[radius=1.5pt];
            \fill[blue] (0.75,0)  circle[radius=1.5pt];
            \fill[red] (0.75, 0.75)  circle[radius=1.5pt];
            \fill[red] (1, 0.75)  circle[radius=1.5pt];
            \fill[blue] (1,0)  circle[radius=1.5pt];
            \draw[red] (0.25,0)--(0.27, 0.75 );
            \draw[blue] (0.5, 0.75)--(0.5, 0);
            \draw [domain=180:360][red] plot ({0.25*cos(\x)+0.5}, {0.2*sin(\x)+0.75});
            \draw[ domain=0:180][blue] plot ({0.25*cos(\x)+0.75}, {0.2*sin(\x)+0});
            \draw[ domain=0:180][blue] plot ({0.125*cos(\x)+0.875}, {0.1*sin(\x)+0});
\end{tikzpicture} 
\end{center}
are equivalent.

 We define the following sets for each element $ d \in \prod \mathcal{P}_{A_i} $:
\begin{align*} 
 \Top(d_i)= A_i \cap\underline{n} , \;\;\;\;\;\;\;\;\;\;\;\;\;\;\;\;\;\;\;\;\;\;\;  \Bot(d_i) =  A_i \cap\underline{n}' , \;\;\;\;\;\;\;\;\;\;\;\;\;\;\;\;\;\;\;\;\;\;\;\; \\
 \Top(d)= ( \Top(d_0) ,\dots ,\Top(d_{m-1}) ), \;\;\;\;\;\;\; \Bot(d) =( \Bot(d_0) ,\dots ,\Bot(d_{m-1}) )  .
\end{align*}

Let $ \Pm $ be $\F$-vector space with the basis $ \ppm $, as it is defined in \cite{H}, and with the composition:
\begin{align*} 
( \alpha_i)( \beta_i) = \left\{ 
   \begin{array}{l l}
    \prod\limits_{i=0}^{m-1} \delta_i ^{c_i}  ( \beta_j \circ \alpha_j ) & \quad \text{if } \;  \Bot( \alpha)= \Top( \beta) ,\\
     0 & \quad \text{otherwise.}
   \end{array} \right. 
\end{align*}
where $\delta_i \in \F$, $ \alpha, \beta \in \ppm $, $c_{i}$ is the number of removed connected components from the middle row when computing the product $ \beta_i  \circ \alpha_i $ for each $ i=0, \dots , m-1 $ and $ \circ $ is the normal composition of partition diagrams.

The vector space $ \Pm$ is an associative algebra, called the \emph{multi-colour partition algebra}, with identity:
\[ 1_{ \PM } = \sum\limits_{(A_0, \dots , A_{m-1}) \in \Xi^{n,m} }  1_{(A_0 ,  \ldots ,  A_{m-1})} := \sum\limits_{(A_0, \dots , A_{m-1}) \in \Xi^{n,m} } ( 1_{A_0} ,  \ldots ,  1_{A_{m-1}} ) , \]
where $ \Xi^{n,m} := \{ (A_0 , \ldots ,  A_{m-1}) \mid \cup_{i=0}^{m-1} A_i= \underline{n}, A_i \cap A_j = \emptyset \; \forall i \neq j \} $, $1_{ A_i }$ is the partition of the set $ A_i \cup A'_i $ where any node $ j $ is only connected with the node $ j' $  for all $ j \in A_i$ and $ A_i'=\{ j' \mid j \in A_i \} $, for all $ 0 \leq i \leq m-1 $. This means the identity is the summation of all the different multi-colour partitions that their diagrams connect $i$ only to $i'$ with any colour for each $1 \leq i \leq n $.

\hspace{15pt}
The diagrams of shape $ \id \in \Sym_n $  are orthogonal idempotents, since
\[ 1_{(A_0 ,  \ldots ,  A_{m-1})} 1_{(B_0 ,  \ldots ,  B_{m-1})} =\left\{   \begin{array}{l l}
    0  & \quad \text{if }  \; (A_i) \neq (B_i) , \\
    1_{(A_0 ,  \ldots ,  A_{m-1})}   & \quad \text{if } \;  (A_i) = (B_i),
   \end{array} \right. \]
 for all $ (A_i), (B_i) \in   \Xi^{n,m} $. Thus we have a decomposition of the identity as a sum of orthogonal idempotents since  $ 1_{ \PM}=  \sum\limits_{(A_i) \in   \Xi^{n,m} } 1_{(A_0 ,  \ldots ,  A_{m-1})} $. As it have been showed in \cite{H}, the algebra $ \Pm$ is cellular and the last decomposition satisfies all the conditions in Assumption \ref{ass}.

Let $ (A_i) \in   \Xi^{n,m}$, then
\begin{align} 
1_{ (A_i)  } \Pm 1_{ (A_i)  } & \cong  \mathbb{P}_{ | A_0|} ( \delta_0)  \otimes _{\F} \cdots \otimes_{\F} \mathbb{P}_{ | A_{m-1}| } ( \delta_{m-1} ) ,  \label{eq2:1S1}
\end{align}
where $ \mathbb{P}_{ | A_i|} ( \delta_i) $ is the normal partition algebra and $ | A_i| $ is the cardinality of $ A_i $, for the proof see Chapter 2 in \cite{H}.

\begin{theorem}
The algebra $ \Pm $ is semisimple over $ \mathbb{C} $ for each integers $ n \geq 0 $ and $ m \geq 1 $ if and only if none of the parameters $ \delta_i $ is a a natural number less than $ 2n$.
\end{theorem}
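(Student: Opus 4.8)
The plan is to obtain the statement as a combination of Theorem~\ref{thm.33}, the tensor-product identification \eqref{eq2:1S1}, and the classical semisimplicity criterion for the ordinary partition algebra over $\mathbb{C}$. As recorded above (following \cite{H}), the decomposition $1_{\PM}=\sum_{(A_i)\in\Xi^{n,m}}1_{(A_0,\dots,A_{m-1})}$ into orthogonal idempotents satisfies all parts of Assumption~\ref{ass}. Hence Theorem~\ref{thm.33} applies and gives: $\Pm$ is semisimple over $\mathbb{C}$ if and only if the corner algebra $1_{(A_i)}\,\Pm\,1_{(A_i)}$ is semisimple for every $(A_0,\dots,A_{m-1})\in\Xi^{n,m}$.

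I would then substitute \eqref{eq2:1S1}, which identifies each corner algebra with a tensor product $\mathbb{P}_{|A_0|}(\delta_0)\otimes_{\mathbb{C}}\cdots\otimes_{\mathbb{C}}\mathbb{P}_{|A_{m-1}|}(\delta_{m-1})$ of ordinary partition algebras, and reduce to deciding when such a finite tensor product of finite-dimensional $\mathbb{C}$-algebras is semisimple. The key input here is the standard fact that over an algebraically closed field a finite tensor product of finite-dimensional algebras is semisimple if and only if every factor is: if some factor $B_j$ has nonzero (nilpotent) Jacobson radical $J_j$, then $B_1\otimes\cdots\otimes J_j\otimes\cdots\otimes B_m$ is a nonzero nilpotent two-sided ideal of the product; conversely a tensor product of matrix algebras over $\mathbb{C}$ is again a matrix algebra, so tensoring semisimple $\mathbb{C}$-algebras produces a semisimple algebra. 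The trivial factors $\mathbb{P}_0(\delta_i)=\mathbb{C}$ play no role.

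Putting these two steps together, $\Pm$ is semisimple over $\mathbb{C}$ if and only if $\mathbb{P}_{|A_i|}(\delta_i)$ is semisimple over $\mathbb{C}$ for every colour $i$ and every $(A_j)\in\Xi^{n,m}$. For each fixed $i$ the integer $|A_i|$ attains its maximal value $n$ (take $A_i=\underline{n}$ and all the other blocks empty), and the set of scalars for which $\mathbb{P}_k(\delta)$ fails to be semisimple is non-decreasing in $k$; hence the constraint coming from $k=n$ subsumes all the others, and the condition becomes: $\mathbb{P}_n(\delta_i)$ is semisimple over $\mathbb{C}$ for every $i$. The last ingredient is the classical theorem of Martin describing precisely the scalars $\delta$ for which $\mathbb{P}_n(\delta)$ fails to be semisimple over $\mathbb{C}$; quoting it, and noting that the union of the exceptional parameter sets for $k=0,1,\dots,n$ is just the one for $k=n$, converts the above into the asserted condition that no $\delta_i$ is a natural number less than $2n$.

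The two reductions above are routine consequences of Theorem~\ref{thm.33} and \eqref{eq2:1S1}. The step I expect to require the most care is the tensor-product criterion over $\mathbb{C}$: this is the only place where algebraic closedness is used, and it is the reason the statement is phrased over $\mathbb{C}$ rather than an arbitrary field. After that the remaining work is bookkeeping, namely checking which ordinary partition algebras $\mathbb{P}_k(\delta_i)$ genuinely arise as tensor factors (all $k$ with $0\le k\le n$, the case $k=n$ being the binding one) and then quoting the exact non-semisimplicity range of $\mathbb{P}_n(\delta)$ so that it produces the bound $2n$ in the statement.
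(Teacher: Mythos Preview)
Your proposal is correct and follows essentially the same route as the paper: reduce via Theorem~\ref{thm.33} and \eqref{eq2:1S1} to the corner algebras, then invoke the known semisimplicity criterion for the ordinary partition algebra $\mathbb{P}_n(\delta)$ over $\mathbb{C}$ (the paper cites \cite{MS3}). The paper's own proof is a single sentence that leaves the tensor-product step and the ``$k=n$ is the binding constraint'' bookkeeping implicit, so your version is in fact a more carefully fleshed-out rendering of the same argument.
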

\begin{proof}
As the algebra $ \Pp $ is semi-simple over $ \mathbb{C} $ whenever $ \delta  $ is not an integer in the range $ [0,2n-1] $, see Corollary 10.3 in \cite{MS3}, we obtain this theorem.
\end{proof}

\bibliographystyle{plain}

\begin{thebibliography}{99}

\bibitem{GL1} 
J. Graham and G. Lehrer, \emph{{C}ellular algebras}. Inventiones Mathematicae, $123(1): 1\--34$, 1996.

\bibitem{GM2}
U. Grimm and P. Martin, \emph{The bubble algebra: structure of a two-colour {T}emperley\-- {L}ieb Algebra},J. of Physics, $ 36(42): 10551 \-- 10571$, 2003.

\bibitem{H}
 M. Hmaida, \emph{Representation theory of algebras related to the bubble algebra}, PhD thesis, The university of Leeds, 2016.

\bibitem{JM}
 M. Jegan, \emph{{H}omomorphisms between bubble algebra modules}, PhD thesis, City university London, 2013.

\bibitem{M1}
P. Martin, \emph{The structure of the partition algebras}, J. Algebra, $ 183(2): 319 \-- 358 $, 1996.


\bibitem{M3}
A. Mathas,  \emph{Iwahori--{H}ecke algebras and {S}chur algebras of the symmetric group}, American Mathematical Society, 1999.


\bibitem{MS3}
P. Martin and H. Saleur, \emph{ On an algebraic approach to higher dimensional statistical mechanics},Communications in Mathematical Physics, $158(1): 155 \-- 190 $, 1993.
 
 
 \bibitem {W3}
K. Wada, \emph{ A cellular algebra with certain idempotent decomposition}, arXiv preprint arXiv:0805.1147, 2008.
 
\end{thebibliography}

\end{document}